\numberwithin{equation}{subsection}
\newtheorem{theorem}{Theorem}[subsection]
\newtheorem{lemma}[theorem]{Lemma}
\newtheorem{corollary}[theorem]{Corollary}
\newtheorem{definition}[theorem]{Definition}
\newtheorem{proposition}[theorem]{Proposition}
\newtheorem*{thm1}{Theorem 1}
\newtheorem*{thm2}{Theorem 2}
\newtheorem{cor}[theorem]{Corollary}
\newtheorem*{cor1}{Corollary 1}
\newtheorem*{cor2}{Corollary 2}
\theoremstyle{remark}
\newtheorem{rmk}[theorem]{Remark}
\title{Generalized $\mu$-ordinary Hasse invariants}
\newcommand{\GZip}{\mathop{\text{$G$-{\tt Zip}}}\nolimits}
\newcommand{\GjZ}{\mathop{\text{$G_j$-{\tt Zip}}}\nolimits}
\newcommand{\GtildeZ}{\mathop{\text{$\tilde{G}$-{\tt Zip}}}\nolimits}
\newcommand{\GoneZip}{\mathop{\text{$G_1$-{\tt Zip}}}\nolimits}
\newcommand{\GtwoZip}{\mathop{\text{$G_2$-{\tt Zip}}}\nolimits}
\DeclareMathOperator{\Stab}{Stab}
\DeclareMathOperator{\Gal}{Gal}
\newcommand{\ad}{\textnormal{ad}}
\newcounter{listcounter}
\newcounter{deflistcounter}
\newcounter{equivcounter}
\newskip{\itemsepamount}
\newskip{\topsepamount}
\newenvironment{assertionlist}{%
  \begin{list}
    {\upshape (\arabic{listcounter})}
    {\setlength{\leftmargin}{18pt}
     \setlength{\rightmargin}{0pt}
     \setlength{\itemindent}{0pt}
     \setlength{\labelsep}{5pt}
     \setlength{\labelwidth}{13pt}
     \setlength{\listparindent}{\parindent}
     \setlength{\parsep}{0pt}
     \setlength{\itemsep}{\itemsepamount}
     \setlength{\topsep}{\topsepamount}
     \usecounter{listcounter}}}
  {\end{list}}
\newenvironment{definitionlist}{%
  \begin{list}
    {\upshape (\alph{deflistcounter})}
    {\setlength{\leftmargin}{18pt}
     \setlength{\rightmargin}{0pt}
     \setlength{\itemindent}{0pt}
     \setlength{\labelsep}{5pt}
     \setlength{\labelwidth}{13pt}
     \setlength{\listparindent}{\parindent}
     \setlength{\parsep}{0pt}
     \setlength{\itemsep}{\itemsepamount}
     \setlength{\topsep}{\topsepamount}
     \usecounter{deflistcounter}}}
  {\end{list}}
\newenvironment{equivlist}{%
  \begin{list}
    {\upshape (\roman{equivcounter})}
    {\setlength{\leftmargin}{18pt}
     \setlength{\rightmargin}{0pt}
     \setlength{\itemindent}{0pt}
     \setlength{\labelsep}{5pt}
     \setlength{\labelwidth}{13pt}
     \setlength{\listparindent}{\parindent}
     \setlength{\parsep}{0pt}
     \setlength{\itemsep}{\itemsepamount}
     \setlength{\topsep}{\topsepamount}
     \usecounter{equivcounter}}}
  {\end{list}}
\DeclareMathOperator{\Coker}{Coker}
\newcommand{\Ecal}{{\mathcal E}}
\newcommand{\Gcal}{{\mathcal G}}
\newcommand{\Ocal}{{\mathcal O}}
\newcommand{\Scal}{{\mathcal S}}
\newcommand{\Vcal}{{\mathcal V}}
\newcommand{\Xcal}{{\mathcal X}}
\newcommand{\Zcal}{{\mathcal Z}}
\newcommand{\pfr}{{\mathfrak p}}
\renewcommand{\AA}{\mathbb{A}}
\newcommand{\CC}{\mathbb{C}}
\newcommand{\FF}{\mathbb{F}}
\newcommand{\GG}{\mathbb{G}}
\newcommand{\NN}{\mathbb{N}}
\newcommand{\QQ}{\mathbb{Q}}
\newcommand{\RR}{\mathbb{R}}
\renewcommand{\SS}{\mathbb{S}}
\newcommand{\ZZ}{\mathbb{Z}}
\DeclareMathOperator{\Pic}{Pic}
\DeclareMathOperator{\Ker}{Ker}
\newcommand{\Ascr}{{\mathscr A}}
\newcommand{\Lscr}{{\mathscr L}}
\newcommand{\Vscr}{{\mathscr V}}
\newcommand{\Ltilde}{\widetilde{L}}
\newcommand{\Mtilde}{\widetilde{M}}
\newcommand{\Ptilde}{\widetilde{P}}
\newcommand{\Qtilde}{\widetilde{Q}}
\DeclareMathOperator{\codim}{codim}
\newcommand{\Th}{{\rm Th.}}
\newcommand{\Rmk}{{\rm Rmk.}}
\newcommand{\Cor}{{\rm Cor.}}
\newcommand{\Lem}{{\rm Lem.}}
\newcommand{\Def}{{\rm Def.}}
\newcommand{\Prop}{{\rm Prop.}}
\newcommand{\loccit}{{\em loc.\ cit. }}
\newcommand{\loccitn}{{\em loc.\ cit.}}
\DeclareMathOperator{\Div}{div}
\DeclareMathOperator{\Hom}{Hom}
\DeclareMathOperator{\Res}{Res}
\DeclareMathOperator{\id}{id}
\DeclareMathOperator{\lcm}{lcm}
\author{Jean-Stefan Koskivirta, Torsten Wedhorn}
\date{\today}
\begin{document}

\begin{abstract}
We give a short proof for the existence of $\mu$-ordinary Hasse invariants for the good reduction special fiber of Shimura varieties of Hodge-type using the stack of $G$-zips introduced by Moonen-Wedhorn and Pink-Wedhorn-Ziegler. We give an explicit formula for the power of the Hodge bundle that admits a Hasse invariant. When $G$ is a Weil restriction of $G_1$, we relate the Ekedahl-Oort strata of $G$ and those of $G_1$.
\end{abstract}

\maketitle

Keywords : Shimura varieties, reductive groups, algebraic groups, stacks.

\section*{Introduction}

The special fibers of integral models of Shimura varieties have nice stratifications by subvarieties which are equipped with interesting Hecke actions. They play an important role in many recent developments in the construction of $p$-adic modular forms and Galois representations. At primes of good reductions for Shimura variteies of Hodge type, all stratifications that are usually considered (Newton stratification, Ekedahl-Oort stratification, foliation by central leaves) have the same generic stratum (\cite{Wortmann-mu-ordinary}), named the $\mu$-ordinary stratum in \cite{Wedhorn-ordinariness-Shimura-varieties}.

In this article we construct a ``canonical'' Hasse invariants for the $\mu$-ordinary stratum, i.e., a section of some power of the Hodge line bundle over the special fiber whose non-vanishing locus is precisely the $\mu$-ordinary locus.

The construction of these Hasse invariants has a long history even beyond the classical cases of the modular curve or the Siegel Shimura variety. E. Goren established the existence of partial Hasse invariants for the case of Hilbert-Blumenthal Shimura varieties (\cite{Goren-partial-hasse}). In the split unitary case of signature $(n-1,1)$, Ito constructed Hasse invariants for all Ekedahl-Oort strata (\cite{Ito-hasse}). Finally, Goldring and Nicole constructed a $\mu$-ordinary Hasse invariant for unitary Shimura varieties (\cite{Goldring-Nicole-mu-Hasse}). After the present paper appeared on the ArXiv, some more results were proved. For PEL-cases of type A and C, Boxer constructed Hasse invariants on all Ekedahl-Oort strata. More recently, \cite{Hernandez-hasse} constructs $\mu$-ordinary Hasse invariants in the (unramified) unitary case using crystalline cohomology (a method similar to the one used in \cite{Goldring-Nicole-mu-Hasse}), and \cite{Bij-Hern-hasse} extends Hernandez' result to the ramified case. These approaches all rely on ``semi-linear algebra techniques'' using Dieudonn\'e modules or related structures which become more and more involved the more complicated the moduli problem is.

In this article we introduce a new group theoretic approach to the construction of $\mu$-ordinary Hasse invariants. Since this paper has appeared on ArXiv, our methods have been vastly extended by Goldring and the first author (\cite{Goldring-Koskivirta-Strata-Hasse} and \cite{Goldring-Koskivirta-zip-flags}) to construct Hasse invariants for all Ekedahl-Oort strata and to give several arithmetic applications. See below for a comparative description of these papers.

To be more precise we introduce some notation.

\subsection*{Shimura varieties and $G$-zips}
Let $(\mathbf{G}, X)$ be a Shimura datum of Hodge-type and let $\Scal_K$ be the Kisin-Vasiu integral model of the associated Shimura variety $Sh_K(\mathbf{G},X)$ at a level $K$, hyperspecial at $p$. Denote by $S_K$ the special fiber of $\Scal_K$ and write $G$ for the special fiber of a reductive $\ZZ_{p}$-model of $\mathbf{G}_{\QQ_p}$. We study the $\mu$-ordinary stratum using the Ekedahl-Oort stratification.

Recall that Zhang \cite{ZhangEOHodge} gives a smooth morphism $\zeta\colon S_K \to \GZip^\mu$, where $\GZip^\mu$ is the stack of $G$-zips, defined by Pink-Wedhorn-Ziegler in \cite{Pink-Wedhorn-Ziegler-zip-data} (see also the precursor paper \cite{Moonen-Wedhorn-Discrete-Invariants}) and $\mu$ pertains to the cocharacter attached to the Shimura datum. The geometric fibers of $\zeta$ are called Ekedahl-Oort strata of $S_K$. In this paper we study the open zip stratum $U_\mu\subset \GZip^\mu$ and its inverse image, the generic Ekedahl-Oort stratum $S_{K,\mu}\subset S_K$. It coincides with the $\mu$-ordinary Newton stratum.

Attached to the pair $(G,\mu)$, there is a zip datum $\Zcal:=(G,P,L,Q,M,\varphi)$ (\S\ref{subsec-cochar}), where $L$ is the centralizer of $\mu$ in $G$ and $P$ corresponds to the stabilizer of the Hodge filtration. One attaches to each $\lambda \in X^*(L)$ a line bundle $\Vscr(\lambda)$ on the stack $\GZip^\mu$. Its pull-back $\zeta^*(\Vscr(\lambda))$ coincides with the automorphic line bundle $\Vscr_K(\lambda)$ naturally attached to $\lambda$. For example, there exists $\lambda_\omega \in X^*(L)$ such that $\Vscr_K(\lambda_\omega)$ is the Hodge line bundle $\omega$ on $S_K$.

\subsection*{Hasse invariants}

In this paper, we say that a section $h\in H^0(\GZip^\mu,\Vscr(\lambda))$ is a Hasse invariant if its non-vanishing locus is exactly the $\mu$-ordinary stratum $U_\mu$. As $H^0(\GZip^\mu,\Vscr(\lambda))$ has dimension $\leq 1$ (\Prop~\ref{GlobalSec}~\eqref{item-Glob1}), $h$ is unique up to scalar. There is an explicit integer $N_\mu$ (\Def~\ref{defNeta}) satisfying the following:

\begin{thm1}[\Th~\ref{mainthm}]
If $\lambda \in X^*(L)$ is $\Zcal$-ample, there exists a unique (up to scalar) Hasse invariant $h\in H^0(\GZip^\mu,\Vscr(N_\mu\lambda))$.
\end{thm1}

For the definition of $\Zcal$-ample, see \Def~\ref{Zampledef}. The character $\lambda_{\omega}$ defining the Hodge line bundle is $\Zcal$-ample.

In \cite[\Th~3.2.3]{Goldring-Koskivirta-Strata-Hasse}, a similar result for all strata is proved using a group-theoretical counterpart of a flag space of Ekedahl and Van der Geer. The methods used here to prove \Th~1 differ in many aspects from \loccit; they are based on the study of Ekedahl-Oort strata in the case of a Weil restriction, which we explain below.
%
%
Furthermore, here we do not assume in \Th~1 that $\lambda$ satisfies the condition "orbitally $p$-close" of \loccit Hence \Th~1 gives a stronger result than \loccit for the open zip stratum. Another difference is the fact that we determine explicitly the integer $N_\mu$ in \Th~1, whereas \loccit gives an undetermined integer. This is the smallest integer satisfying the existence of a Hasse invariant. We compute $N_\mu$ for PEL-cases in Subsection~\ref{CalcNmu}.

In particular, we obtain the following corollary (\Cor~\ref{mainShimura}):

\begin{cor1}\label{introthmShimura}
There exists a section $h_K\in H^0(S_K,\omega^{N_\mu})$ whose non-vanishing locus is the $\mu$-ordinary locus of $S_K$.
\end{cor1}

Let $S_K^{\rm min}$ denote the minimal compactification of $S_K$ and continue to denote by $\omega$ the extension of the Hodge bundle on $S_K^{\rm min}$. By a formal argument, the section $h_K$ of Corollary 1 extends uniquely to a section of $\omega^{N_\mu}$ over $S^{\rm min}_K$. Define the $\mu$-ordinary locus $S^{\rm min}_{K,\mu}$ as the non-vanishing locus of this extension. We have the following consequence:

\begin{cor2}
The $\mu$-ordinary locus $S^{\rm min}_{K,\mu}$ is affine.
\end{cor2}

\subsection*{Weil restriction}

Our proof of \Th~1 uses a detailed study of the case when $G=\Res_{\FF_{p^r}/\FF_p}(G_1)$ for some integer $r\geq 1$ and a reductive group $G_1$ over $\FF_{p^r}$. Let $k$ be an algebraic closure of $\FF_p$ and identify $G_k$ with $G_1\times...\times G_r$ for $G_i=\sigma^{i-1}(G_1)$, where $\sigma\in \Gal(k/\FF_p)$ is the geometric Frobenius element. Let $\mu \in X^*(G)$ be a cocharacter and $\Zcal=(G,P,L,Q,M,\varphi)$ its associated zip datum (\S\ref{subsec-cochar}). For each $1\leq j \leq r$, we define a zip datum $\Zcal_j=(G_j,P'_j,L'_j,Q'_j,M'_j,\varphi^r)$ (\S\ref{subsec-Zj}). Our main technical result, which is of independent interest for the study of zip strata, is the following kind of ``Shapiro's lemma''. 

\begin{thm2}[\Th~\ref{bijorbits}]
The map $C\mapsto C\cap G_{j}$ defines a
bijection between zip strata for $\Zcal$ in $G$ intersecting $G_j$ and zip strata for $\Zcal_j$ in $G_j$. Furthermore one has $\codim_G(C) = \codim_{G_j}(C\cap G_j)$ for all such $C$.
\end{thm2}

Moreover, there is a relation between Hasse invariants for $G$ and Hasse invariants for each factor (\Prop~\ref{propHasseWeil}).

We now give an overview of the paper. In \S1, we recall basic facts about the stack $\GZip^\Zcal$. Then, we prove some general results on equivariant Picard groups in \S2. The next section is dedicated to results about the open zip stratum. In particular we determine the stabilizer of $1$, used to define the integer $N_\mu$. In \S4, we study the case of a Weil restriction and prove \Th~2 above. Finally, we prove \Th~1 and \Cor~1 in the last section, combining the results of the previous sections. 

\section*{Acknowledgments}
The first author is grateful to Wushi Goldring for valuable remarks on the paper. He also wishes to thank David Helm for fruitful discussions.

\section{$G$-zips}
\subsection{Stack of $G$-zips} \label{subsec-GZip}
Let $p$ be a prime number, and $q$ a power of $p$. We fix an algebraic closure $k$ of $\FF_q$. In this paper we define a zip datum over $\FF_q$ to be a tuple $\Zcal=(G,P,L,Q,M,\varphi)$ where $G$ is a connected reductive group over $\FF_q$, $\varphi:G\to G$ is the relative $q$-power Frobenius homomorphism, $P,Q\subset G$ are parabolic subgroups of $G_k$, $L\subset P$ and $M\subset Q$ are Levi subgroups of $P$ and $Q$ respectively. We suppose that $\sigma(L) = M$, where $\sigma(\ )$ denotes pullback by $q$-power absolute Frobenius morphism. The zip group $E$ is defined by
\begin{equation}\label{zipgroup}
E:=\{(x,y)\in P\times Q, \varphi(\overline{x})=\overline{y}\}
\end{equation}
where $\overline{x}\in L$ and $\overline{y}\in M$ are the Levi components  of $x$ and $y$. We let $G\times G$ act on $G$ via $(a,b)\cdot g:=agb^{-1}$ and we let $E$ act on $G$ by restricting this action. The stack of $G$-zips is the quotient stack (\cite{PinkWedhornZiegler-F-Zips-additional-structure}):
\begin{equation}
\GZip^\Zcal\simeq \left[E \backslash G \right].
\end{equation}

\subsection{Frame}\label{subsec frame}
A frame for $\Zcal$ is a triple $(B,T,z)$ where $(B,T)$ is a Borel pair in $G_k$ and $z\in G(k)$ satisfying:
\begin{enumerate}[(i)]
\item $B\subset Q$, ${}^z B \subset P$,
\item $\varphi({}^z B\cap L)=B\cap M$, $\varphi({}^z T)= T$.
\end{enumerate}
In particular, these conditions imply ${}^z T\subset L$. Frames always exist by \cite[3.7]{Pink-Wedhorn-Ziegler-zip-data}. For a frame $(B,T,z)$, we use the following notation.
\begin{enumerate}[(1)]
\item $\Phi\subset X^*(T)$ denotes the set of $T$-roots of $G$.
\item $\Phi_+ \subset \Phi$ is the set of positive roots with respect to $B$, i.e such that $U_\alpha \subset B$.
\item $\Delta\subset \Phi_+$ is the set of positive simple roots.
\item Denote by $W:=W(G_k,T)$ the Weyl group of $G_k$. For $\alpha \in \Phi$, let $s_\alpha \in W$ be the corresponding reflection. Then $(W,\{s_\alpha\}_{\alpha \in \Delta})$ is a Coxeter group and we denote by $\ell :W\to \NN$ the length function.
\item For a subset $K\subset \Delta$, Let $W_K \subset W$ be the subgroup generated by $\{s_\alpha, \ \alpha\in K\}$. Let $w_0\in W$ be the longest element in $W$ and $w_{0,K}$ the longest element in $W_K$.
\item If $R\subset G$ is a parabolic subgroup containing $B$ and $D$ is the unique Levi subgroup of $R$ containing $T$, then the type of $R$ (or of $D$) is the unique subset $K\subset \Delta$ such that $W(D,T)=W_K$. The type of an arbitrary parabolic $R$ is the type of its unique conjugate containing $B$. Let $I\subset \Delta$ (resp. $J\subset \Delta$) be the type of $P$ (resp. $Q$).
\item For $K\subset \Delta$, let ${}^K W$ (resp. $W^K$) be the set of elements $w\in W$ which are of minimal length in the coset $W_K w$ (resp. $wW_K$).
\end{enumerate}
 We say that $(B,T,z)$ is an $\FF_q$-frame if $B,T$ are defined over $\FF_q$. In this case one has $z\in N_G(T)$, so $z$ gives rise to an element $z\in W$.
\subsection{$E$-orbits} \label{subsec stratif}
For $w\in W$ we choose a representative $\dot{w}\in N_G(T)$, such that $(w_1w_2)^\cdot = \dot{w}_1\dot{w}_2$ whenever $\ell(w_1 w_2)=\ell(w_1)+\ell(w_2)$ (this is possible by choosing a Chevalley system, see \cite[ XXIII, \S6]{SGA3}). We often write simply $w$ instead of $\dot{w}$. For $h\in G(k)$, let $\Ocal_\Zcal(h)$ be the $E$-orbit of $h$ in $G$. By \cite[\Th~7.5]{Pink-Wedhorn-Ziegler-zip-data}, there is a bijection:
\begin{equation}\label{param}
{}^I W \to \{E \textrm{-orbits in }G\}, \quad w\mapsto G_w:=\Ocal(z\dot{w}).
\end{equation}
Furthermore, for all $w\in {}^I W $, one has $\dim(G_w)= \ell(w)+\dim(P)$. Endow the locally closed subset $G_w$ with the reduced structure, and define the corresponding zip stratum of $\GZip^\Zcal$ by $Z_w:= \left[E\backslash G_w \right]$. We will denote by $U_\Zcal \subset \GZip^\Zcal$ the unique open zip stratum corresponding to $w_{0,I}w_0$, the longest element in ${}^IW$. When $\Zcal=\Zcal_\mu$ for some $\mu \in X^*(G)$, we write $U_\mu:=U_{\Zcal_\mu}$.

\subsection{Cocharacters} \label{subsec-cochar}
A cocharacter $\mu:\GG_{m,k}\to G_k$ defines a pair of opposite parabolics $P_\pm(\mu)$ such that $P_+(\mu)\cap P_-(\mu)=L(\mu)$ is the centralizer of $\mu$. The parabolic $P_+(\mu)$ consists of elements $g\in G$ such that the limit
\begin{equation}\label{limparab}
\lim_{t\to 0}\mu(t)g\mu(t)^{-1}
\end{equation}
exists, i.e such that the map $\GG_{m,k} \to G_{k}$,  $t\mapsto\mu(t)g\mu(t)^{-1}$ extends to a morphism of varieties $\AA_{k}^1\to G_{k}$. The unipotent radical of $P_+(\mu)$ is the set of such elements $g$ for which this limit is $1\in G(k)$.

Define $P:=P_-(\mu)$, $Q:=\sigma\left( P_+(\mu) \right)$, $L:=L(\mu)$ and $M:= \sigma(L(\mu))$. The tuple $\Zcal_\mu:=(G,P,L,Q,M,\varphi)$ is a zip datum attached to the cocharacter $\mu$. In the following we consider only $G$-zips arising in this way. We write simply $\GZip^\mu$ for $\GZip^{\Zcal_\mu}$. Replacing $\mu$ by a conjugate cocharacter does not change the isomorphism class of $\GZip^{\mu}$. Hence the following remark shows that it is harmless to assume that $\Zcal$ admits an $\FF_q$-frame.

\begin{rmk}\label{framecochar}
Let $\mu:\GG_{m,k}\to G_k$ be a cocharacter and $\Zcal_\mu:=(G,P,L,Q,M,\varphi)$ the zip datum attached to $\mu$.
\begin{enumerate}
\item \label{itemframe1} There exists a conjugate cocharacter $\mu':=\ad(g)\circ \mu$ (for $g\in G$) such that $\Zcal_{\mu'}$ admits an $\FF_q$-frame.
\item \label{itemframe2} Assume $(B,T)$ is a Borel pair defined over $\FF_q$ such that $B\subset Q$. Then $(B,T,w_0 w_{0,I})$ is an $\FF_q$-frame.
\end{enumerate}
\end{rmk}

Let $G_1,G_2$ be connected $\FF_q$-reductive groups and $\mu_1\in X_*(G_1)$, $\mu_2\in X_*(G_2)$ cocharacters. Let $f:G_1\to G_2$ be an injective homomorphism defined over $\FF_p$, such that $\mu_2=f_k\circ \mu_1$. Then $f$ induces naturally a morphism of stacks
\begin{equation}
f^{\#}:\GoneZip^{\mu_1}\to \GtwoZip^{\mu_2}.
\end{equation}
For $i=1,2$, denote by $P_i,Q_i$ the parabolic subgroups of $G_i$ attached to $\mu_i$. Using \eqref{limparab}, it is clear that $f(\square_1)\subset \square_2$ for $\square=P,L,Q,M$ and also $f(R_u(P_1))\subset R_u(P_2)$, $f(R_u(Q_1))\subset R_u(Q_2)$. It follows easily that $P_1=f^{-1}(P_2)$, $Q_1=f^{-1}(Q_2)$.

\section{Picard groups}
\subsection{Equivariant Picard group}

In this section, $G$ denotes an arbitrary connected linear algebraic
group over an algebraically closed field $k$. A variety is an integral $k$-scheme of finite type. A $G$-scheme is a $k$-scheme endowed with a $G$-action $a: G \times X \to X$.
Let $\mathscr{L}$ be a line bundle on a $G$-scheme $X$. Define the projections $p_{23}\colon G\times G\times X\to G\times X$, $(g,h,x)\mapsto (h,x)$ and $p_2:G\times X\to X$, $(g,x)\mapsto x$. Finally, write $\mu_G$ for the multiplication map $G\times G\to G$.

\begin{definition}
A $G$-linearization of $\mathscr{L}$ is an isomorphism $\phi : a^*(\mathscr{L})\to p_{2}^*(\mathscr{L})$ satisfying the cocycle condition
$$ p_{23}^*(\phi) \circ (id_G \times a)^*(\phi) =(\mu_G \times id_X)^*(\phi).$$
\end{definition}

We denote by $\Pic^{G}(X)$ the group of isomorphism classes of $G$-linearized line bundles on $X$. Forgetting the $G$-linearization induces a natural map $\Pic^{G}(X) \to \Pic(X)$, whose image is the subgroup $\Pic_{G}(X)\subset \Pic(X)$ of $G$-linearizable line bundles. The group $\Pic^{G}(X)$ can be identified with the Picard group of the quotient stack $\left[G \backslash X\right]$. Then $\Pic^G(X) \to \Pic(X)$ is the homomorphism given by pull back by the projection $X \to [G \backslash X]$.

\subsection{The general result}
For any $k$-scheme $X$, define $\Ecal(X) := \mathcal{O}(X)^{\times} / k^{\times}$. If $X$ is an integral $k$-scheme of finite type, $\Ecal(X)$ is a finitely generated free abelian group (\cite[\S 1.3]{Knop-Kraft-Vust-G-variety}). If $G$ is an algebraic group, the natural map $X^*(G)\to \Ecal(G)$ is an isomorphism, by \loccit

Denote by $H^1_{\rm alg}(G,\Ocal(X)^\times)\subset H^1(G,\Ocal(X)^\times)$ the subgroup of classes of algebraic cocycles, i.e. cocycle maps $G(k) \to \Ocal(X)^\times$ induced by an algebraic morphism $G \times X \to \GG_m$.

\begin{theorem}\label{exseqgen}
Let $G$ be a smooth algebraic group, and $X$ an irreducible $G$-variety. There are exact sequences:
\[
0\to \frac{\GG_m(X)^G}{k^\times}\to \Ecal(X)^{G}\to X^{*}(G)\to H^1_{\rm alg}(G,\GG_m(X)) \to H^{1}(\pi_0(G)(k),\Ecal(X))
\]
\[
0\to H^1_{\rm alg}(G,\GG_m(X)) \to \Pic^{G}(X) \to \Pic(X)
\]
If $X$ is normal and $G$ connected, the second exact sequence has an extension by a map $\Pic(X) \to \Pic(G)$.
\end{theorem}

\begin{proof}
This is \cite[\Prop~2.3, \Lem~2.2]{Knop-Kraft-Vust-G-variety} in characteristic $0$. The same proof applies to arbitrary characteristic.
\end{proof}

\subsection{Some consequences}

\begin{corollary}\label{exseq}
Let $G$ be a smooth connected algebraic group $X$ a normal irreducible $G$-variety. There is an exact sequence of abelian groups
\[
1 \to k^{\times} \to (\mathcal{O}(X)^{\times})^G \to \Ecal(X) \to X^{*}(G) \to \Pic^{G}(X) \to \Pic(X) \to \Pic(G)
\]
\end{corollary}

\begin{proof}
As $G$ is connected, it acts trivially on the discrete group $\Ecal(X)$.
\end{proof}

\begin{corollary}\label{exactseqs}
Let $G$ be a smooth connected algebraic group and $H$ a smooth subgroup of $G$. There is an exact sequence of abelian groups
$$0 \to \Ecal(G/H) \to X^*(G) \to X^{*}(H) \to \Pic(G/H) \to \Pic(G).$$
\end{corollary}

\begin{proof}
Apply \Th~\ref{exseqgen} for $X = G$ and $G = H$. Note that $H$ acts trivially on $\Ecal(G)=X^{*}(G)$. It follows that $H^{1}(\pi_0(H),\Ecal(G)) = \Hom(\pi_0(H),\Ecal(G)) = 0$ because $\pi_0(H)$ is finite and $\Ecal(G)$ is torsion-free.
\end{proof}

\begin{corollary}\label{surject}
Let $G$ be a connected linear algebraic $\mathbb{F}_{q}$-group. There is an exact sequence
$$0 \to X^{*}(G)\to X^{*}(G)\to \Hom(G(\mathbb{F}_{q}),k^{\times}) \to \Pic(G)$$
where the first map is $\chi \mapsto \sigma \cdot \chi-\chi$.
\end{corollary}

\begin{proof}
Apply \Cor~\ref{exactseqs} for $H:=G(\FF_q)$ and note that the Lang-Steinberg map $G\to G$, $x\mapsto \varphi(x)x^{-1}$ induces an isomorphism of varieties $G/H \simeq G$.
\end{proof}



\section{The open orbit}
\subsection{Intersection of parabolic subgroups}
Let $k$ be an algebraically closed field and $G$ a reductive group over $k$. Recall the following result:
\begin{proposition}\label{InterParab}
Let $P$ and $Q$ be two parabolic subgroups in $G$ with unipotent radicals $U$ and $V$, respectively. Let $T\subset P\cap Q$ be a maximal torus and let $L\subset P$ and $M\subset Q$ denote the Levi subgroups containing $T$.
\begin{assertionlist}
\item\label{InterParab1}
The subgroups $P\cap Q$, $L\cap M$, $L\cap V$, $M\cap U$, $U\cap V$ are smooth and connected.
\item\label{InterParab2}
The group $\left(P\cap Q\right).U$ is a parabolic subgroup of $G$
contained in $P$, with Levi subgroups $L\cap M$.
\item\label{InterParab3}
Any element $x\in P\cap Q$ can be written uniquely as a product $x=abcd$,
with $a\in L\cap M$, $b\in L\cap V$, $c\in M\cap U$, $d\in U\cap V$.
\end{assertionlist}
\end{proposition}

\begin{proof}
 The smoothness of $P\cap Q$ follows from \cite[XXVI, \Lem~4.1.1]{SGA3}. This implies the smoothness of the other subgroups. For the rest, see \cite[\Prop~2.1]{digne-michel}.
\end{proof}

The last statement means that $P\cap Q$ is the product of the varieties $L\cap M$, $L\cap V$, $M\cap U$, $U\cap V$. In particular:

\begin{cor}\label{contained}
Retain the notation of \Prop~\ref{InterParab}.
If $U\cap V=M\cap U=\{1\}$, then $P\cap Q\subset L$.
\end{cor}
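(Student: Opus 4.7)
The plan is to apply directly part \ref{InterParab3} of Proposition \ref{InterParab}, which provides the decomposition
\[
P\cap Q = (L\cap M)\cdot(L\cap V)\cdot(M\cap U)\cdot(U\cap V)
\]
with uniqueness of factorization. Given any $x \in P\cap Q$, write it uniquely as $x = abcd$ with $a\in L\cap M$, $b\in L\cap V$, $c\in M\cap U$, $d\in U\cap V$. The two hypotheses $U\cap V = \{1\}$ and $M\cap U = \{1\}$ force $d = 1$ and $c = 1$, so $x = ab$. Since both factors $a$ and $b$ lie in $L$, we conclude $x \in L$, which is precisely the assertion $P\cap Q \subseteq L$.

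There is no real obstacle here; the corollary is a formal consequence of the Bruhat-type decomposition of $P\cap Q$ recalled in Proposition \ref{InterParab}, and the only content is to observe which two of the four factors collapse under the given hypotheses. I would simply write the short proof as above, without any additional bookkeeping.
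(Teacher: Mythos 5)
Your proof is correct and is exactly the argument the paper intends: the corollary is stated as an immediate consequence of the product decomposition $P\cap Q = (L\cap M)\cdot(L\cap V)\cdot(M\cap U)\cdot(U\cap V)$ from Proposition~\ref{InterParab}~(3), with the two hypotheses killing the factors $M\cap U$ and $U\cap V$ and the remaining two factors lying in $L$. Nothing further is needed.
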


For a parabolic subgroup $P$ and a Levi subgroup $L\subset P$, denote by $\theta_{L}^{P}:P\to L$ the natural projection modulo the unipotent radical of $P$.

\begin{cor}\label{CorReduct}
Retain the notation of \Prop~\ref{InterParab}.
\begin{assertionlist}
\item \label{item-cor1} For all $x\in P\cap Q$, one has $\theta_{L}^{P}(x)\in L\cap Q$ and $\theta_{M}^{Q}(x)\in P\cap M$.
\item \label{item-cor2} For all $x\in P\cap Q$, one has $\theta_{L}^{P}(\theta_{M}^{Q}(x))=\theta_{M}^{Q}(\theta_{L}^{P}(x))\in L\cap M$.
\item \label{item-cor4} Assume $T\subset B\subset P\cap Q$ for some Borel $B$. Then $P\cap Q$ is a parabolic with Levi $L\cap M$ and for all $x\in P\cap Q$, one has $\theta_{L}^{P}(\theta_{M}^{Q}(x))=\theta_{M}^{Q}(\theta_{L}^{P}(x))=\theta_{L\cap M}^{P\cap Q}(x)$.
\item \label{item-cor3} Assume $G$ is defined over $\FF_q$ and let $\varphi:G\to G$ the $q$-th power Frobenius. Then $\varphi \left( \theta_{L}^{P}(x) \right)=  \theta_{\sigma L}^{\sigma P}(\varphi(x))$.
\end{assertionlist}
\end{cor}

\begin{proof}
Using the notation of \Prop~\ref{InterParab}\eqref{InterParab3}, write $x=abcd \in P \cap Q$. Then $\theta_{L}^{P}(x)=ab\in L\cap Q$ and similarly $\theta_{M}^{Q}(x)=ac\in P\cap M$ which shows \eqref{item-cor1}. This implies $\theta_{L}^{P}(\theta_{M}^{Q}(x))=a=\theta_{M}^{Q}(\theta_{L}^{P}(x))$ and hence \eqref{item-cor2}. The first part of \eqref{item-cor4} is \Prop~\ref{InterParab}\eqref{InterParab2}. For the second part, write $x=\theta_{L}^{P}(x)u$ with $u\in R_u(P)\subseteq R_u(P\cap Q)$. Now $\theta_{L}^{P}(x)=\theta_{M}^{Q} \left( \theta_{L}^{P}(x) \right) v$ with $v\in R_u(Q)\subseteq R_u(P\cap Q)$. The result follows. The last assertion is clear. 
\end{proof}

\subsection{Stabilizer}\label{stabilizer}
We fix a cocharacter $\mu:\GG_{m,k}\to G_k$ and we assume that $\Zcal_\mu$ admits an $\FF_q$-frame $(B,T,z)$. By \Rmk~\ref{framecochar}\eqref{itemframe2}, we can take $z:=w_0 w_{0,I}$. Let $B_-$ be the unique Borel subgroup such that $B\cap B_-=T$. Note that $B_-\subset P$. The maximal element of ${}^I{W}$ is $\eta = w_{0,I}w_0=z^{-1}$. By \eqref{param}, the unique open $E$-orbit in $G$ is $G_{\eta}=E\cdot (z\eta)=E\cdot 1$.

Since $\dim(E)=\dim(G)$, the stabilizer $\Stab_E(1)\subset E$ is a finite group scheme (usually not smooth), that we now determine. We denote by $S_{\rm red}$ its underlying reduced subgroup scheme. By definition, one has
\begin{equation}
\Stab_E(1) = \{(x,y) \in E, \ x = y\}.
\end{equation}
The first projection $E\to P$ identifies $\Stab_E(1)$ with a subgroup $S\subset P\cap Q$. Define:
\[
P_{0}:=\bigcap_{i\in\mathbb{Z}}\sigma^i P\quad,\quad Q_{0}:=\bigcap_{i\in\mathbb{Z}}\sigma^i Q\quad,\quad L_{0}:=\bigcap_{i\in\mathbb{Z}}\sigma^i L.
\]
Since $B\subset Q_0$ and $B_-\subset P_{0}$ and $(B,T)$ is defined over $\FF_q$, we see that $P_0$ and $Q_0$ are opposite parabolic $\mathbb{F}_{q}$-subgroups such that $P_0\cap Q_0=L_{0}$. The type of $P_0$ (resp. $Q_0$) is $\bigcap_i \sigma^i I$ (resp. $\bigcap_i \sigma^i J$).

\begin{lemma}\label{ContainQ0} \ 
\begin{enumerate}
\item \label{item-contain1} One has $Q_{0}\cap P\subset L$.
\item \label{item-contain2} One has $S\subset Q_0 \cap L$.
\item \label{item-contain3} One has $S_{\rm red}=S\cap L_0=L_{0}(\mathbb{F}_{q})$ (considered as a finite constant group scheme).
\end{enumerate}
\end{lemma}

\begin{proof}
Assertion \eqref{item-contain1} follows from \Cor~\ref{contained}, because $L_{0}\cap R_{u}(P) \subseteq L \cap R_{u}(P) =\{1\}$ and $R_{u}(Q_{0})\cap R_{u}(P) \subseteq  Q \cap R_{u}(P) =\{1\}$.

We now prove \eqref{item-contain2}. Let $x\in S$ be an arbitrary element. By definition of $E$, one has $\varphi(\theta_{L}^{P}(x))=\theta_{M}^{Q}(x)$. Since $x\in P\cap Q$, one has $\theta_{L}^{P}(x)\in Q$ by \Cor~\ref{CorReduct}. Hence $\theta_{M}^{Q}(x)\in\sigma(Q)$ and we deduce $x\in\sigma(Q)$
because $R_{u}(Q)\subset R_{u}(B)\subset\sigma(Q)$. Now we can apply
the same argument to $\sigma(Q)$ to show $x\in\sigma^{2}(Q)$. Continuing
this process, we get $x\in Q_{0}$. Since $S\subset P$, we also have
$S\subset L$ by \eqref{item-contain1}. This shows \eqref{item-contain2}.

To prove \eqref{item-contain3}, we show first $S_{\rm red}\subset L_{0}$. It suffices to show $S(k)\subset L_{0}(k)$. Let $x\in S(k)$. Again $\varphi(\theta_{L}^{P}(x))=\theta_{M}^{Q}(x)$, so $\theta_{M}^{Q}(x)\in P$ by \Cor~\ref{CorReduct}. We deduce that $\theta_{L}^{P}(x)\in\sigma^{-1}(P)$ and then $x\in\sigma^{-1}(P)$.
Repeating the argument, we get $x\in P_{0}$. Since $S\subset Q_0$ by \eqref{item-contain2}, we conclude $S_{\rm red}\subset L_{0}$. For $x\in L_{0}$, $x\in S_{\rm red}$ if and only if $\varphi(x)=x$, so $S_{\rm red}=L_{0}(\mathbb{F}_{q})$. Since the Lang-Steinberg map $L_{0}\to L_{0}$, $x\mapsto x^{-1}\varphi(x)$ is \'{e}tale, it follows that the algebraic group $S\cap L_{0}=\left\{ x\in L_{0},\ \varphi(x)=x\right\}$ is smooth, equal to the constant group $L_{0}(\mathbb{F}_{q})$, hence $S\cap L_{0}=S_{\rm red}=L_0(\FF_q)$.
\end{proof}

Denote by $S^\circ \subset S$ is the identity component of $S$. Since $S$ is a finite group-scheme over a perfect field, $S\simeq S^\circ \rtimes S_{\rm red}$ and in particular 
\begin{equation}\label{charex}
X^*(S) \simeq X^*(S_{\rm red}) \times X^*(S^\circ).
\end{equation}

\begin{lemma}\label{ProjectS} \ 
\begin{enumerate}
\item \label{item-ProjS1} If $x\in S$, then $\theta_{L_{0}}^{Q_{0}}(x)\in S_{\rm red}$.
\item \label{item-ProjS2} One has $S^\circ \subset R_u(Q_0)\cap L$.
\item \label{item-ProjS3} One has $X^*(S^\circ)=0$, in particular $X^*(S)= X^*(S_{\rm red})=\Hom(L_0(\FF_q),k^\times)$.
\end{enumerate}
\end{lemma}

\begin{proof}
We prove first \eqref{item-ProjS1}. Let $x\in S$ be an element. One has $\varphi(\theta_{L}^{P}(x))=\theta_{M}^{Q}(x)$ by definition and $x\in Q_{0}\cap L$ by \Lem~\ref{ContainQ0}\eqref{item-contain2}. Hence $\theta_{L}^{P}(x)=x$ and $\varphi(x)=\theta_{M}^{Q}(x)$. Using \Cor~\ref{CorReduct}, we deduce
\[
\varphi(\theta_{L_{0}}^{Q_{0}}(x))=\theta_{L_{0}}^{Q_{0}}(\varphi(x))=\theta_{L_{0}}^{Q_{0}}(\theta_{M}^{Q}(x)) = \theta_{M}^{Q}(\theta_{L_{0}}^{Q_{0}}(x)) = \theta_{L_{0}}^{Q_{0}}(x) \]
It follows that $\theta_{L_{0}}^{Q_{0}}(x)\in L_{0}(\mathbb{F}_{q})=S_{\rm red}$, which proves \eqref{item-ProjS1}. We obtain a group homomorphism $\theta_{L_{0}}^{Q_{0}}:S\to S_{\rm red}$, which is necessarily trivial on $S^\circ$, hence $S^\circ \subset \Ker(\theta_{L_{0}}^{Q_{0}})=R_u(Q_0)$, which proves \eqref{item-ProjS2}. Finally, \eqref{item-ProjS3} is a direct consequence of \eqref{item-ProjS2}.
\end{proof}

Define a group homomorphism $\label{EqDefineZeta}
\zeta\colon X^{*}(L_{0})\to X^{*}(L_{0}), \ \chi \mapsto \chi - \chi\circ\varphi$. It is clear that $\zeta$ is injective.

\begin{cor}\label{zeta}
Assume $\Pic(L_0)=0$ (e.g., if the derived group of $G$ is simply connected). Then one has
\begin{enumerate}
\item \label{item-zeta1} $\Coker(\zeta)=X^{*}(S)$.
\item \label{item-zeta2} The order of $X^{*}(S)$ is $|\det(\zeta)|$.
\end{enumerate}
\end{cor}

\begin{proof}
Assertion \eqref{item-zeta1} follows from \Cor~\ref{surject} and \eqref{item-zeta2} is an immediate consequence.
\end{proof}

\begin{definition}\label{defNeta}
We define the integer $N_\mu$ as the exponent of the finite group $X^*(S) = \Hom(L_0(\FF_q),k^\times)$.
\end{definition}

\begin{cor}\label{HasseSplit}
Assume $G$ is split over $\mathbb{F}_{q}$ and that the derived group of $G$ is simply connected. Then $N_\mu=q-1$.
\end{cor}

\begin{proof}
Since $G$ is $\FF_q$-split, one has $\zeta = 1-q$, so the result follows.
\end{proof}

\subsection{Line bundles on $\GZip^\mu$}\label{subsecline}
The first projection $E\to P$ induces an identification $X^*(E)=X^*(P)=X^*(L)$. For a character $\lambda \in X^*(L)$, let $\Vscr(\lambda)$ be the line bundle on $\GZip^\Zcal \simeq [E\backslash G]$ attached to $\lambda$ via $X^*(E)\to \Pic^E(G)$. A global section $s\in H^0(\GZip^\Zcal,\Vscr(\lambda))$ is a regular map $s: G_k\to \AA_k^1$ satisfying the condition
\begin{equation}\label{eqsection}
s(\varepsilon \cdot g)=\lambda(\varepsilon)s(g), \qquad \forall g\in G, \ \varepsilon\in E.
\end{equation}
Recall that $U_\Zcal$ denotes the unique open zip stratum in $\GZip^\Zcal$. Similarly, a section $s\in H^0(U_\mu,\Vscr(\lambda))$ is a regular map $s: G_\eta\to \AA_k^1$ satisfying \eqref{eqsection}.

\begin{proposition}\label{GlobalSec}
Let $\lambda \in X^*(L)$ be a character. 
\begin{assertionlist}
\item \label{item-Glob1} One has $\dim_k(H^{0}(\GZip^\mu,\Vscr(\lambda)))\leq \dim_k(H^{0}(U_\mu,\Vscr(\lambda))) \leq 1$.
\item \label{item-Glob2} The space $H^{0}(U_\mu,\Vscr(N_\mu \lambda))$ has dimension one.
\end{assertionlist}
\end{proposition}

\begin{proof}
The inclusion $U_\mu \subset \GZip^\mu$ induces an injection $H^{0}(\GZip^\mu,\Vscr(\lambda)) \to H^{0}(U_\mu,\Vscr(\lambda))$. By \eqref{eqsection}, an element $f\in H^{0}(U_\mu,\Vscr(\lambda))$ is uniquely determined by the value of $f$ at a given point of $G_\eta$, which proves \eqref{item-Glob1}.

The space $H^{0}(U_\mu,\Vscr(\lambda))$ is nonzero if and only if $\lambda :L\to k^\times$ is trivial on the subgroup $S\subset L$. Since $N_\mu \lambda$ induces the trivial character on $S$, we deduce \eqref{item-Glob2}.
\end{proof}

\section{Weil restriction and $G$-zips}

\subsection{Notations}\label{Weilres}

Let $r\geq 1$ be an integer. Let $G_1$ be a connected reductive group over $\mathbb{F}_{q^r}$ and $G=\textrm{Res}_{\mathbb{F}_{q^r}/\mathbb{F}_q}(G_1)$. Let $\sigma\in\mbox{Gal}(k/\mathbb{F}_{q})$ be the $q$-th power arithmetic Frobenius. Over $k$, the group $G$ decomposes as a product
\begin{equation}
G_k=G_{1}\times\cdots\times G_{r}\label{prodG}
\end{equation}
where $G_{i}=\sigma^{i-1}(G_{1})$. The Frobenius sends $G_{i}$ onto $G_{i+1}$ (indices taken modulo $r$).

Let $\mu:\GG_{m,k}\to G_k$ be a cocharacter and $\Zcal_\mu:=(G,P,L,Q,M,\varphi)$ its attached zip datum. Assume further that $(B,T)$ is an $\FF_q$-Borel pair such that $B\subset Q$. For $z:=w_0 w_{0,I}$, the triple $(B,T,z)$ is an $\FF_q$-frame for $\Zcal_\mu$ (\Rmk~\ref{framecochar}~\eqref{itemframe2}). For each $\square=P,L,Q,M,B,T$, one has a decomposition $\square=\prod_{i=1}^r \square_i$. Since $(B,T)$ is defined over $\FF_q$, one has $\sigma(B_i) = B_{i+1}$ and $\sigma(T_i) = T_{i+1}$. By definition one has $\sigma(L_i)=M_{i+1}$.

The Weyl group $W:=W(G,T)$ decomposes into a product (as a Coxeter group)
\[
W = W_{1} \times \cdots \times W_{r},
\]
where $W_i:=W(G_i,T_i)$. Let $w_{0,i}$ be the element of maximal length in $W_i$. The Frobenius induces an automorphism of Coxeter groups of $W$ again denoted by $\sigma$ and we have $\sigma(W_i) = W_{i+1}$.

Let $I$ be the type of $P$. Then ${}^I W$ decomposes as ${}^I W={}^{I_1} W_1\times\cdots\times {}^{I_r} W_r$ where $I_{i}\subset \Delta_{i}$ is the type of the $P_{i}$. Then one has $z=(z_1,...,z_r)$ where $z_j=w_{0,j} w_{0,I_j}$ for all $j=1,...,r$. Using the parametrization \eqref{param}, the $E$-orbits of codimension one in $G$ are given by
\begin{equation}
\label{Eorbs}
C_{j,\alpha}:=E\cdot\left(1,\dots,1,\dot{w}_{0,j} \dot{s}_{\alpha} \dot{w}_{0,j},1,\dots,1\right), \qquad 1\leq j \leq r, \ \alpha \in \Delta_j \setminus I_j.
\end{equation}

\begin{rmk}\label{DescribERes}
An element of $E$
can be written in the form $\left(\left(x_{1},\dots,x_{r}\right),\left(y_1,y_2,\dots,y_{r}\right)\right)$ with $x_i \in P_i$, $y_i \in Q_{i}$ and $\varphi(\theta_{L_i}^{P_i}(x_{i}))=\theta_{M_{i+1}}^{Q_{i+1}}(y_{i+1})$ for all $1 \leq i \leq r$ (indices taken modulo $r$).
\end{rmk}

\subsection{The zip datum $\Zcal_j$} \label{subsec-Zj}

For $j=1,...,r$, define parabolic subgroups in $G_{j}$ by
\[
P'_{j}=\bigcap_{i=0}^{r-1}\sigma^{-i}(P_{i+j}) \quad\mbox{and}\quad Q'_{j}= \bigcap_{i=0}^{r-1}\sigma^{i}(Q_{j-i})
\]
where the index $i$ of $P_i,Q_i$ is taken modulo $r$. Clearly $B^-_j\subset P'_j$ and $B_j\subset Q'_j$ since $B$ is defined over $\mathbb{F}_{q}$. The Levi subgroups of $P'_j$ and $Q'_j$ containing $T_j$ are respectively
\[
L'_{j}=\bigcap_{i=0}^{r-1}\sigma^{-i}(L_{i+j}) \quad\mbox{and}\quad M'_{j}= \bigcap_{i=0}^{r-1}\sigma^{i}(M_{j-i})
\]
\begin{lemma}
The tuple $\Zcal_j:=(G_j,P'_j,L'_j,Q'_j,M'_j,\varphi^r)$ is a zip datum over $\FF_{q^r}$.
\end{lemma}
\begin{proof}
This follows from the relations 
\begin{equation}
\sigma^r L'_{j}=\bigcap_{i=0}^{r-1}\sigma^{r-i}(L_{i+j})=\bigcap_{i=0}^{r-1}\sigma^{i+1}(L_{j-(i+1)})=\bigcap_{i=0}^{r-1}\sigma^{i}(L_{j-i})=M'_{j}.
\end{equation}
\end{proof}
Denote by $E'_j\subset P'_j\times Q'_j$ the attached zip group.
For $(x,y)\in E'_j$ write $\overline{x}:= \theta_{L'_j}^{P'_j}(x)$ and set
\begin{equation}\label{injEprime}
\begin{aligned}
u_j(x,y) &:= (\varphi^{r-j+1}(\overline{x}),...,\varphi^{r-1}(\overline{x}), x,\varphi(\overline{x}),\dots,\varphi^{r-j}(\overline{x}))\in G \\
v_j(x,y) &:=(\varphi^{r-j+1}(\overline{x}),...,\varphi^{r-1}(\overline{x}), y,\varphi(\overline{x}),\dots,\varphi^{r-j}(\overline{x}))\in G\\
\gamma_j(x,y)&:=(u_j(x,y),v_j(x,y)) \in E.
\end{aligned}
\end{equation}
This gives an injective group homomorphism $\gamma_j:E'_j\to E$. Consider $G_j$ as a subgroup of $G$ by identifying it with $\{1\} \times \dots \times \{1\} \times G_j \times \{1\} \times \dots \times \{1\}$. Define
\begin{equation}
E^{(j)}:=\{\varepsilon \in E, \ \varepsilon \cdot G_j =G_j\}.
\end{equation}
It is clear that $\gamma_j(E'_j)\subset E^{(j)}$.

\begin{rmk}\label{StabG1}
Let $\varepsilon=\left(\left(x_{1},\dots,x_{r}\right),\left(y_1,\dots,y_{r}\right)\right)$ be an element of $E$. The following are equivalent.
\begin{equivlist}
\item $\varepsilon \in E^{(j)}$
\item There exists $g_j \in G_j$ such that $\varepsilon\cdot g_j \in G_j$.
\item $x_i = y_i$ for all $i\neq j$.
\end{equivlist}
\end{rmk}

For each $1\leq d \leq r$, we define a parabolic subgroups $\Ptilde_{d,j}$ in $G_{d+j-1}$ and $\Qtilde_{d,j}$ in $G_{d+j}$  by intersecting Galois translates of the parabolic subgroups $P_i$, $Q_i$ as follows:
\begin{equation}
\Ptilde_{d,j} := \bigcap_{i=d-1}^{r-1}\sigma^{d-i-1}(P_{i+j})\qquad \textrm{and}\qquad \Qtilde_{d,j} := \bigcap_{i=0}^{d-1}\sigma^{i}(Q_{d+j-i})
\end{equation}
where the index $m$ in $P_m$ and $Q_m$ is taken modulo $r$. Note that by definition one has $\Ptilde_{1,j} = P'_{j}$ and $\Qtilde_{r,j} = Q'_j$. One has the formulas:
\begin{align}
&\Ptilde_{d,j}=\sigma^{-1}(\Ptilde_{d+1,j})\cap P_{d+j-1} \quad \textrm{ for all }1\leq d \leq r-1, \textrm{ for all }j \\
&\Qtilde_{d,j}=\sigma(\Qtilde_{d-1,j})\cap Q_{d+j} \quad \quad \ \ \textrm{ for all }2\leq d \leq r, \textrm{ for all }j.
\end{align}

\begin{lemma}\label{technical} \ 
\begin{assertionlist}
\item \label{item-tech1} Let $1\leq j \leq r$ and $\varepsilon=\left(\left(x_{1},\dots,x_{r}\right),\left(y_1,\dots,y_{r}\right)\right)\in E^{(j)}(k)$. Then for all $1 \leq d\leq r$, one has $x_{d+j-1}\in \Ptilde_{d,j}$ and $y_{d+j}\in \Qtilde_{d,j}$, where the index $m$ in $x_m$, $y_m$ is taken modulo $r$. In particular, $x_j\in \ P'_{j}$ and $y_j \in Q'_j$.
\item \label{item-tech2} Furthermore, one has $(x_j,y_j)\in E'_j(k)$.
\end{assertionlist}
\end{lemma}

\begin{proof}
We first prove \eqref{item-tech1} by decreasing induction on $d$. For $d=r$ we have $x_{j-1} \in \Ptilde_{r,j}= P_{j-1}$ so there is nothing to prove. Fix an integer $1\leq d< r$. By Remark~\ref{StabG1} we have $x_{i} = y_{i}$ for all $i\neq j$. By induction, we may assume $y_{d+j} = x_{d+j}\in \Ptilde_{d+1,j}$. Applying \Cor~\ref{CorReduct}~\eqref{item-cor1} to $\Ptilde_{d+1,j}$ and $Q_{d+j}$ in $G_{d+j}$ (both containing the torus $T_{d+j}$), we obtain
\begin{equation} \label{equ1}
\theta_{M_{d+j}}^{Q_{d+j}}(y_{d+j}) \in \Ptilde_{d+1,j}.
\end{equation}
By definition of $E$, one has  for all $i\in \ZZ$
\begin{equation}\label{equ2}
\varphi\left(\theta_{L_{i}}^{P_{i}}(x_{i})\right) = \theta_{M_{i+1}}^{Q_{i+1}}(y_{i+1}).
\end{equation}
Combining \eqref{equ1} and \eqref{equ2} for $i=d+j-1$, we get $\varphi\left(\theta_{L_{d+j-1}}^{P_{d+j-1}}(x_{d+j-1})\right)\in \Ptilde_{d+1,j}$. It follows that $\theta_{L_{d+j-1}}^{P_{d+j-1}}(x_{d+j-1})$ lies in $P_{d+j-1}\cap\sigma^{-1}(\Ptilde_{d+1,j})=\Ptilde_{d,j}$ (because $\varepsilon$ is a $k$-valued point). One has $B^-_{d+j-1} \subset \Ptilde_{d,j} \subset P_{d+j-1}$, so $R_u(P_{d+j-1})\subset \Ptilde_{d,j}$. We deduce $x_{d+j-1}\in \Ptilde_{d,j}$, which proves the first part of \eqref{item-tech1}.

The second part of \eqref{item-tech1} is proved by increasing induction on $d$. It is clear for $d=1$. For $1<d\leq r$, we assume by induction $y_{d+j-1}=x_{d+j-1}\in \Qtilde_{d-1,j}$. By \Cor~\ref{CorReduct}~\eqref{item-cor2}, we have $\theta^{P_{d+j-1}}_{L_{d+j-1}}(x_{d+j-1}) \in \Qtilde_{d-1,j}$. Using \eqref{equ2} for $i=d+j-1$, we find  $\theta^{Q_{d+j}}_{M_{d+j}}(y_{d+j}) \in Q_{d+j} \cap \sigma(\Qtilde_{d-1,j}) = \Qtilde_{d,j}$. Since $R_u(Q_{d+j})\subset \Qtilde_{d,j}$ we deduce $y_{d+j}\in \Qtilde_{d,j}$, which shows the second part of \eqref{item-tech1}.

We now prove \eqref{item-tech2}. We must show $\varphi^{r}\left(\theta^{P'_j}_{L'_j}(x_j)\right)=\theta^{Q'_j}_{M'_j}(y_j)$. For this we use again an auxiliary parabolic subgroup $\widehat{P}_{d,j}$ in $G_j$ defined for all $j$ and $1\leq d\leq r$ by:
\begin{equation}
\widehat{P}_{d,j}:=\bigcap_{i=0}^{d-1}\sigma^{-i}(P_{i+j}).
\end{equation}
Let $\widehat{L}_{d,j}$ the Levi subgroup of $\widehat{P}_{d,j}$ containing $T_j$.
Clearly $\widehat{P}_{1,j}=P_j$ and $\widehat{P}_{r,j}=P'_j$. We use increasing induction on $1\leq d \leq r$ to show:
\[
\varphi^d \left( \theta_{\widehat{L}_{d,j}}^{\widehat{P}_{d,j}} (x_j)\right) = \theta_{\Mtilde_{d,j}}^{\Qtilde_{d,j}}(y_{d+j}).\tag{*}
\]
Note that this makes sense because of \eqref{item-tech1}. For $d=1$ this is \eqref{equ2} for $i=j$. Assume that (*) holds for $d$ and apply the operator $\varphi \circ \theta^{P_{d+j}}_{L_{d+j}}$ to (*). Using \Cor~\ref{CorReduct}~\eqref{item-cor3},\eqref{item-cor2} and the fact that $x_i=y_i$ for $i\neq j$, we get:
\begin{align*}
\varphi^{d+1} \left( \theta_{\widehat{L}_{d+1,j}}^{\widehat{P}_{d+1,j}} (x_j)\right) & = \varphi \left(\theta^{P_{d+j}}_{L_{d+j}} \left( \theta_{\Mtilde_{d,j}}^{\Qtilde_{d,j}}(y_{d+j}) \right)\right) \\
& = \varphi \left(\theta_{\Mtilde_{d,j}}^{\Qtilde_{d,j}}\left(\theta^{P_{d+j}}_{L_{d+j}} (y_{d+j}) \right)\right) \\
& =\theta_{\sigma \Mtilde_{d,j}}^{\sigma \Qtilde_{d,j}} \varphi\left(\theta^{P_{d+j}}_{L_{d+j}} (y_{d+j}) \right) \\
& =\theta_{\sigma \Mtilde_{d,j}}^{\sigma \Qtilde_{d,j}} \left(\theta^{Q_{d+j+1}}_{M_{d+j+1}} (y_{d+j+1}) \right) \\
& =\theta_{\sigma \Mtilde_{d,j} \cap M_{d+j+1}}^{\sigma \Qtilde_{d,j} \cap Q_{d+j+1}}  (y_{d+j+1}) \\
& =\theta_{\Mtilde_{d+1,j}}^{\Qtilde_{d+1,j}}(y_{d+j+1}).
\end{align*}
which is (*) for $d+1$, and this proves the claim. For $d=r$ in (*), we obtain $\varphi^{r}\left(\theta^{P'_j}_{L'_j}(x_j)\right)=\theta^{Q'_j}_{M'_j}(y_j)$, as desired.
\end{proof}

\subsection{A result on orbits}

Let $\Xcal$ denote the set of $E$-orbits in $G$, and $\Xcal_j\subset \Xcal$ the set of $E$-orbits intersecting $G_j\subset G$. By equation \eqref{Eorbs}, we have $C_{j,\alpha}\in \Xcal_j$ for all $1\leq j \leq r$ and $\alpha \in \Delta_j \setminus I_j$. Also note that $G_\eta =E\cdot 1 \in \Xcal_j$ for all $j$. The set of $E$-orbits (resp. $E'_j$-orbits) is a partially ordered set with respect to closure relations.

\begin{theorem}
\label{bijorbits}The map $C\mapsto C\cap G_{j}$ defines a
bijection between $\Xcal_{j}$ and the set of $E'_{j}$-orbits in $G_{j}$. Furthermore one has $\codim_G(C) = \codim_{G_j}(C\cap G_j)$ for all $C\in \Xcal_j$.
\end{theorem}
\begin{proof}
First we prove that $C\cap G_{j}$ is an $E'_{j}$-orbit. Let $u,v\in C\cap G_{j}$. We can find an element 
\[
\varepsilon=\left(\left(x_{1},\dots,x_{r}\right),\left(y{}_{1},\dots,y_{r}\right)\right)\in E
\]
such that $\varepsilon\cdot u=v$. Then one has $\varepsilon \in E^{(j)}$ by \Rmk~\ref{StabG1}, so $(x_{j},y_{j})\in E'_{j}$ by \Lem~\ref{technical}. Thus $\sigma\cap G_{j}$ is contained in a $E'_{j}$-orbit. Let $g\in C\cap G_{j}$ and $(x,y)\in E'_{j}$. Write $u_j:=u_j(x,y)$ and $v_j:=v_j(x,y)$ defined as in \eqref{injEprime}. One has $(u_j,v_j)\in E^{(j)}$ and $xgy^{-1}=u_jgv_j^{-1}$, so $C\cap G_{j}$ is exactly an $E'_{j}$-orbit. Hence the map $C\mapsto C\cap G_j$ is well-defined. The bijectivity is clear.

We now prove the second assertion. Let $g\in C\cap G_j$ and $\varepsilon=((x_1,...,x_r),(y_1,...,y_r))\in \Stab_E(g)(k)$. It follows from \Lem~\ref{technical}~\eqref{item-tech2} that $(x_j,y_j)\in \Stab_{E'_j}(g)$. Hence we obtain a homomorphism
\begin{equation}\label{stabmap}
\delta_j: \Stab_E(g)_{\rm red} \to \Stab_{E'_j}(g)_{\rm red}, \quad \varepsilon\mapsto (x_j,y_j).
\end{equation}
The injective homomorphism $\gamma_j:E'_j \to E$ induces a map $\Stab_{E'_j}(g) \to \Stab_E(g)$ such that $\delta_j \circ \gamma_j = \textrm{id}$, so in particular, $\delta_j$ is surjective. Note that $K_j:=\Ker(\delta_j)$, as a subgroup of $E$, is independent of $C\in \Xcal_j$ and of $g\in C\cap G_j$. Taking $g=1$, we see that $K_j$ is finite because $\Stab_E(1)$ is. The result follows.
\end{proof}

\subsection{Stabilizers}
Define subgroups $S:=\Stab_E(1)$ and $E'_j:=\Stab_{E'_j}(1)$. We just saw in \eqref{stabmap} that there is a surjective morphism $\delta_j:S_{\rm red} \to (S'_{j})_{\rm red}$ with finite kernel $K_j$. More precisely:
\begin{lemma}
One has $K_j=1$. In particular, $\delta_j$ induces an isomorphism $S_{\rm red} \to (S'_{j})_{\rm red}$.
\end{lemma}
\begin{proof}
Let $\varepsilon=(\underline{x},\underline{x})\in K_j$ with $\underline{x}=(x_1,...,x_r)$. By \Lem~\ref{ContainQ0}~\eqref{item-contain3}, one has $x_i\in L_i\cap M_i$ for all $1\leq i\leq r$. The condition $\varepsilon\in E$ then implies $\varphi(x_i)=x_{i+1}$ for all $i$. Since $\varepsilon\in K_j$, we have $x_j=1$, so the result follows.
\end{proof}

\section{Hasse invariants}
\subsection{Main result}
Let $\mu:\GG_{m,k}\to G_k$ be a cocharacter and $\Zcal=(G,P,L,Q,M,\varphi)$ the attached zip datum. Let $(B,T,z)$ be an $\FF_q$-frame for $\Zcal$, with $z=w_0 w_{0,I}$. Note that $L\subset \sigma^{-1}Q$ is a Levi subgroup, so $X^*(\sigma^{-1}Q)=X^*(L)$.
\begin{definition}\label{Zampledef}
A character $\lambda \in X^*(L)$ is $\Zcal$-ample if one of the following equivalent conditions is satisfied
\begin{equivlist}
\item The attached line bundle on $G/\sigma^{-1}Q$ is anti-ample.
\item One has $\langle \lambda, \alpha^\vee \rangle <0$ for all $\alpha\in \Delta \setminus \sigma^{-1}J$.
\end{equivlist}
\end{definition}

Let $G_1,G_2$ be connected $\FF_q$-reductive groups and $\mu_1\in X^*(G_1)$, $\mu_2\in X^*(G_2)$ cocharacters. Let $f:G_1\to G_2$ be an injective homomorphism defined over $\FF_p$. For $i=1,2$, write $\Zcal_{\mu_i}=(G_i,P_i,L_i,Q_i,M_i,\varphi)$. Recall that $f$ induces a homomorphism $L_1\to L_2$ (\S\ref{subsec-cochar}).

\begin{lemma}\label{amplepb}
If $\lambda \in X^*(L_2)$ is $\Zcal_2$-ample, then $f^*(\lambda) \in X^*(L_1)$ is $\Zcal_1$-ample.
\end{lemma}

\begin{proof}
Since $f^{-1}(Q_1)=Q_2$ and since $f$ is defined over $\FF_p$, we have an embedding $G_1/\sigma^{-1}Q_1\to G_2/\sigma^{-1}Q_2$. Hence the claim follows from (ii) of \Def~\ref{Zampledef}.
\end{proof}

\begin{definition}
Let $\lambda\in X^*(L)$ be a character. We say that a section $f\in H^0(\GZip^\mu,\Vscr(\lambda))$ is a Hasse invariant if its non-vanishing locus is exactly $U_\mu$.
\end{definition}

We now state the mail result of this article:

\begin{theorem}\label{mainthm}
If $\lambda \in X^*(L)$ is $\Zcal$-ample, there exists a Hasse invariant $h\in H^0(\GZip^\mu,\Vscr(N_\mu \lambda))$. It is unique up to scalar.
\end{theorem}

The uniqueness statement holds by \Prop~\ref{GlobalSec}. To prove the existence we proceed in several steps. We first consider the case when $P=B_-$ is a Borel, then the more general case when $P$ is defined over $\FF_q$ (in these cases, note that $G_\eta$ coincides with the open $P\times Q$ orbit in $G$). Then we show the result for a Weil restriction of degree $r$ when $P$ defined over $\FF_{q^r}$. Finally we prove the general case.

\subsection{The Borel case}
Assume first $P=B_-$, and thus $Q=B$. For $\chi\in X^*(T)$, denote by $\Lscr(\chi)$ the attached line bundle on $G/B$. If $\chi$ is $B$-dominant, define $V_\chi:=H^0(G/B,\Lscr(\chi))$. It is a $G$-representation, and it is known (\cite[\S1, p. 654]{brion-lakshmibai-monomial}) that there is a unique $B$-eigenvector $f_\chi\in V_\chi$, and the corresponding character is $-w_0\chi$. Thus $f_\chi$ identifies with a function $f_\chi:G\to \AA_1$ satisfying $f_\chi(tugt'u')=\chi(w_0 t w_0)\chi(t')f_\chi(g)$ for all $t,t'\in T$, $u,u'\in R_u(B)$ and $g\in G$. Furthermore the divisor of $f_\chi$ is
\begin{equation}
\Div(f_\chi)=\sum_{\alpha \in \Delta} \langle \chi, \alpha^\vee \rangle D_\alpha
\end{equation}
where $D_\alpha =\overline{Bw_0 s_\alpha B}$. The function $h_\chi(g):=f_\chi(w_0g)$ satisfies the relation
\begin{equation} \label{relationh}
h_\chi(agb^{-1})=\chi(\overline{a})\chi(\overline{b})^{-1} h_\chi(g) = \chi(\overline{a})\chi(\varphi(\overline{a}))^{-1} h_\chi(g)
\end{equation}
for all $(a,b)\in E$ and $g\in G$. Hence $h_\chi \in H^0(\GZip^\mu,\Vscr(\lambda))$ for $\lambda=\chi-\chi \circ \varphi$.

Now let $\lambda\in X^*(T)$ be $\Zcal$-ample and fix $m\geq 1$ such that $\lambda$ is defined over $\FF_{q^m}$. One has $\lambda =\chi-\chi \circ \varphi$ for
\begin{equation}\label{chiform}
\chi=-\frac{1}{p^m-1} \left(\lambda + \lambda\circ \varphi +...+ \lambda \circ \varphi^{m-1}\right) \in X^*(T) \otimes \QQ.
\end{equation}
Since $\lambda$ is $\Zcal$-ample, the character $\lambda \circ \varphi^r$ is $\Zcal$-ample for all $r\geq 1$, hence $(p^m-1)\chi$ is $B$-dominant regular character. It follows from the previous discussion that $\Vscr((p^m-1)\lambda)$ admits a Hasse invariant $h_1$. Let $h_2\in H^0(U_\mu,\Vscr(N_\mu \lambda))$ given by \Prop~\ref{GlobalSec}~\eqref{item-Glob2}. Then necessarily $h_1^{N_\mu}=h_2^{p^m-1}$ (up to a nonzero scalar) by the same lemma. Since $G$ is normal, it follows that $h_2$ extends to a Hasse invariant.

\subsection{The case when $P$ defined over $\FF_q$} \label{subsecPratcase}
Assume $P$ is defined over $\FF_q$. Then so is $Q$, and $M=L=P\cap Q$. Since $G_\eta$ is a $P\times Q$-orbit, it is a union of $B_-\times B$-orbits. Its complement is 
\begin{equation}
G\setminus G_\eta =\bigcup_{\alpha \in \Delta \setminus J} w_0 C_\alpha.
\end{equation}
Define $\chi \in X^*(L)\otimes \QQ$ as in \eqref{chiform}, and $\chi_2:=(p^m-1)\chi \in X^*(L)$, which is $B$-dominant. By the previous case, we can find a regular function $h_{\chi_2}:G\to\AA_1$ whose divisor is 
\begin{equation}
\Div(h_{\chi_2})=\sum_{\alpha \in \Delta} \langle \chi_2, \alpha^\vee \rangle w_0 D_\alpha=\sum_{\alpha \in \Delta\setminus J} \langle \chi_2, \alpha^\vee \rangle w_0 D_\alpha.
\end{equation}
Since $\lambda$ is $\Zcal$-ample, $-\chi_2$ is $\Zcal$-ample, hence $ \langle \chi_2, \alpha^\vee \rangle>0$ for all $\alpha \in \Delta\setminus J$. We claim that $h_{\chi_2}$ is a section of $\Vscr((p^m-1)\lambda)$. Since $\Div(h_{\chi_2})$ is $E$-equivariant, it follows from \cite[\S1]{Knop-Kraft-Vust-G-variety} that $h_{\chi_2}$ is an $E$-eigenfunction, i.e there is a character $\theta \in X^*(L)$ such that $h_{\chi_2}(agb^{-1})=\theta(\overline{a}) h_{\chi_2}(g)$ for all $(a,b)\in E$ and $g\in G$. Taking $a,b\in T$ and comparing with \eqref{relationh} shows $\theta =\chi_2-\chi_2 \circ \varphi=(p^m-1)\lambda$, which proves the claim. Hence $h_{\chi_2}\in H^0(\GZip^\mu,\Vscr((p^m-1)\lambda)$ is a Hasse invariant. Finally, we conclude as in the Borel case that $\Vscr(N_\mu\lambda)$ admits a Hasse invariant too.

\subsection{The Weil restriction case}\label{subsecWeilcase}
Let $G=\Res_{\FF_{q^r}/\FF_q}(G_1)$ as in \S\ref{Weilres} and retain the notations therein. For $\lambda \in X^*(L)$, denote by $\lambda_j\in X^*(L_j)$ the character induced by the inclusion $L_j\to L$. Let $\lambda=(\lambda_1,...,\lambda_r) \in X^*(L)$ and $f\in H^0(U_\Zcal,\Vscr(\lambda))$ nonzero (hence non-vanishing). Since $G_\eta \cap G_j$ is the open $E'_j$-orbit in $G_j$ (\Lem~\ref{bijorbits}), $f$ restricts to a non-vanishing section $f_j$ over $U_{\Zcal_j}$ of a certain line bundle, determined in the next proposition.

\begin{proposition}\label{propHasseWeil}
Let $\lambda \in X^*(L)$ and $f\in H^0(U_\Zcal,\Vscr(\lambda))$.
\begin{assertionlist}
\item \label{itemsection1} One has $f_j \in  H^0(U_{\Zcal_j},\Vscr(\lambda'_j))$, for $\lambda'_j:=\sum_{i=0}^{r-1}\lambda_{i+j}\circ \varphi^{i}\in X^*(L'_j)$ (indices taken modulo $r$).
\item \label{itemsection2} $f$ extends to $\GZip^\Zcal$ if and only if $f_j$ extends to $\GjZ^{\Zcal_j}$ for all $1\leq j \leq r$.
\item \label{itemsection3} $f$ extends to a Hasse invariant if and only if $f_j$ does for all $1\leq j \leq r$.
\end{assertionlist}
\end{proposition}

\begin{proof}
We prove \eqref{itemsection1}. Consider the map $\gamma_j:E'_j\to E$ defined in \eqref{injEprime}. For all $\varepsilon_j=(a_j,b_j)\in E'_j$ and $g_j\in G_j$, note that $\varepsilon_j\cdot g_j=\gamma_j(\varepsilon)\cdot g_j$. Hence $f_j$ satisfies $f_j(\varepsilon_j\cdot g_j)=f(\gamma_j(\varepsilon)\cdot g_j)=\lambda(\gamma_j(\varepsilon))f(g_j)=\lambda(\gamma_j(\varepsilon))f_j(g_j)$. Since $\lambda'_j=\lambda\circ \gamma_j$, the result is proved.

We can write $\Div(f)=\sum_{j=1}^r \sum_{\alpha\in \Delta_j\setminus I_j} n_{j,\alpha}\overline{C}_{j,\alpha}$ with $n_{j,\alpha}\in \ZZ$, using the notation introduced in \eqref{Eorbs}. Since $C':=C_{j,\alpha}\cap G_j$ is a codimension one $E'_j$-orbit, the sign of $n_{j,\alpha}$ is the same as the sign of the multiplicity of $f_j$ along $\overline{C'}$. This proves assertions \eqref{itemsection2} and \eqref{itemsection3}.
\end{proof}

\begin{lemma}\label{amplerest}
Let $\lambda\in X^*(L)$ be a $\Zcal$-ample character. Then $\lambda'_j$ is $\Zcal_j$-ample.
\end{lemma}

\begin{proof}
Let $\alpha \in \Delta_j\setminus I_j$. One has $
\langle \lambda'_j,\alpha^\vee \rangle=\sum_{i=0}^{r-1}\langle\lambda_{i+j}\circ \varphi^{i}, \alpha^\vee \rangle=\sum_{i=0}^{r-1}p^i\langle\lambda_{i+j} , \sigma^i(\alpha^\vee) \rangle.$
All the summands are non-positive numbers and $\langle\lambda_{j} ,\alpha^\vee \rangle<0$, hence the result.
\end{proof}

\begin{corollary}\label{corWeilpartial}
Assume that $P_1,...,P_r$ are defined over $\FF_{q^r}$. If $\lambda\in X^*(L)$ is $\Zcal$-ample, there exists a Hasse invariant $h\in H^0(\GZip^\mu,\Vscr(N_\mu \lambda))$.
\end{corollary}

\begin{proof}
Let $h\in H^0(U_\Zcal,\Vscr(N_\mu \lambda))$ denote the section of \Prop~\ref{GlobalSec}~\eqref{item-Glob2}. Using \Prop~\ref{propHasseWeil}, we need to show that $h_j\in H^0(U_{\Zcal_j},\Vscr(\lambda'_j))$ extends to $\GjZ^{\Zcal_j}$. By \Lem~\ref{amplerest}, $\lambda'_j\in X^*(L'_j)$ is $\Zcal_j$-ample. Since $\Zcal_j$ is a zip datum over $\FF_{q^r}$, and $P'_j$ is defined over $\FF_{q^r}$ by assumption, the result follows from the previous case (\S\ref{subsecPratcase}).
\end{proof}

\subsection{The general case}
Let $G$ be a connected reductive $\FF_q$-group, $\mu:\GG_{m,k}\to G_k$ a cocharacter, and $\Zcal=(G,P,L,Q,M,\varphi)$ its zip datum over $\FF_q$. Assume $(B,T,z)$ is an $\FF_q$-frame with $z=w_0 w_{0,I}$. Fix $r\geq 1$ such that $P$ is defined over $\FF_{q^r}$. Define $\tilde{G}:=\Res_{\FF_{q^r}/\FF_q}(G)$ and identify $G_k$ with $G_k\times...\times G_k$. Consider the diagonal embedding $\iota:G\to \tilde{G}$. The cocharacter $\tilde{\mu}:=\iota \circ \mu$ defines a zip datum $\tilde{\Zcal}=(\tilde{G},\tilde{P},\tilde{L},\tilde{Q},\tilde{M},\varphi)$, where $\tilde{P}=P\times...\times P$ and similarly for $L,Q,M$.

Similarly we define $\tilde{T}:=T\times...\times T$, $\tilde{B}:=B\times...\times B$ and $\tilde{z}:=(z,...,z)$. Then $(\tilde{B},\tilde{T},\tilde{z})$ is an $\FF_q$-frame for $\tilde{\Zcal}$. The embedding $\iota$ induces naturally a morphism of stacks $\iota^{\#}:\GZip^\mu \to \GtildeZ^{\tilde{\mu}}$. Zip strata in $\tilde{G}$ are parametrized by the set ${}^{\tilde{I}}\tilde{W}:={}^I W\times ... \times {}^I W$, where $I$ is the type of $P$. Using \eqref{param}, it is clear that $\iota^{\#}$ induces the diagonal embedding ${}^I W \to {}^{\tilde{I}}\tilde{W}$ on zip strata. In particular:

\begin{lemma}\label{opentilde}
One has $(\iota^{\#})^{-1}(U_{\tilde{\mu}})=U_\mu$.
\end{lemma}

If $\tilde{\lambda}=(\lambda_1,...,\lambda_r)\in X^*(\Ltilde)$, then one has $(\iota^{\#})^{*}(\Vscr(\tilde{\lambda}))=\Vscr(\lambda_1+...+\lambda_r)$. Hence \Lem~\ref{opentilde} implies that the restriction of a Hasse invariant for the weight $\tilde{\lambda}$ is a Hasse invariant for the weight $\lambda_1+...+\lambda_r$.

Let $\lambda \in X^*(L)$ be a $\Zcal$-ample character and set $\tilde{\lambda}:=(\lambda,...,\lambda)$. It is clearly $\tilde{\Zcal}$-ample. Since $\tilde{P}$ is defined over $\FF_{q^r}$, \Cor~\ref{corWeilpartial} implies that there exists a Hasse invariant $\tilde{h}$ for $\Vscr(N_{\tilde{\mu}}\tilde{\lambda}))$. Its restriction to $\GZip^\mu$ is a Hasse invariant for the weight $rN_{\tilde{\mu}}\lambda$. As before, we conclude that there is also a Hasse invariant for the weight $N_\mu \lambda$. This terminates the proof of \Th~\ref{mainthm}.

\subsection{Shimura varieties of Hodge-type}
Let $(\mathbf{G},X)$ be a Shimura datum of Hodge type with reflex field $E$, given by a reductive group $\mathbf{G}$ over $\QQ$ and a $\mathbf{G}(\RR)$-conjugacy class of morphisms $\SS\to \mathbf{G}_\RR$. For $h_0\in X$, define $\mu_0:\mathbf{G}_{m,\CC}\to \mathbf{G}_{\CC}$ as $\mu_0(z)=h_{0,\CC}(z,1)$ using the identification $\SS_\CC\simeq \GG_{m,\CC}\times \GG_{m,\CC}$ given by $z\mapsto (z,\overline{z})$ on $\RR$-points.

Fix a prime number $p$ such that $\mathbb{G}_{\QQ_p}$ is unramified and let $\Gcal$ be a reductive $\ZZ_p$-model of $\mathbb{G}_{\QQ_p}$. Fix a neat compact open subgroup $K\subset \mathbf{G}(\AA_f)$ such that $K=K^p K_p$ with $K^p \subset \mathbf{G}(\AA_f^p)$ and $K_p=\Gcal(\ZZ_p)$. Let $Sh_K(\mathbf{G},X)$ denote the attached Shimura variety over $E$. For a prime $\pfr$ of $E$ above $p$, there is a smooth canonical $\Ocal_{E,\pfr}$-model $\Scal_K$ of $Sh_K(\mathbf{G},X)$, by \cite{Vasiu-Preabelian-integral-canonical-models} and \cite{Kisin-Hodge-Type-Shimura}.

Let $S_K:=\Scal_K\otimes_{\Ocal_{E,\pfr}}\kappa(\pfr)$ be its special fiber, where $\kappa(\pfr)$ is the residual field of $\pfr$. The conjugacy class of $\mu_0$ can be lifted to $\Gcal$, and there exists a representative $\mu$ defined over $\Ocal_{E_\pfr}$ because $\Gcal$ is quasi-split. Define $G:=\Gcal \otimes_{\ZZ_p}\FF_p$ and write again $\mu\in X_*(G)$ for the induced cocharacter.

Let $f:\Ascr \to \Scal_K$ be the universal abelian scheme obtained by pull-back from the Siegel-type Shimura variety. The de Rham cohomology $H^1_{dR}(A / S_K)$ together with its Hodge filtration and conjugate filtration define naturally a
$G$-zip over $S_K$ (\cite[\Th~2.4.1]{ZhangEOHodge}). This induces a smooth morphism of stacks (\cite[\Th~3.1.2]{ZhangEOHodge})
\begin{equation}
\zeta:S_K \longrightarrow \GZip^\mu.
\end{equation}
The geometric fibers of $\zeta$ are the Ekedahl-Oort strata of $S_K$. In particular, the open stratum $S_{K,\mu}:=\zeta^{-1}(U_\mu)$ is called the $\mu$-ordinary locus of $S_K$. The Hodge line bundle of $\Scal_K$ is defined as
\begin{equation}
\omega:=f_*(\det(\Omega_{\Ascr/\Scal_K})).
\end{equation}
It is ample on $\Scal_K$ by \cite{Moret-Bailly-abelian-varieties-book}. Denote by $\mathbf{P}\subset \mathbf{G}_{\overline{\QQ}}$ the parabolic subgroup attached to $(\mathbf{G},\mu_0)$. For a character $\lambda \in X^*(\mathbf{P})$, there is an automorphic line bundle $\Vcal_K(\lambda)$ on $\Scal_K$ attached to $\lambda$ and its special fiber coincides with the line bundle $\zeta^*(\Vcal(\lambda))$. Furthermore, there exists a character $\lambda_\omega\in X^*(\mathbf{P})$ such that $\omega=\Vcal(\lambda_\omega)$. Denote by $N_\mu$ the integer defined in \S\ref{subsecline}.

\begin{corollary}\label{mainShimura}
There exists a section $h_K\in H^0(S_K,\omega^{N_\mu})$ whose non-vanishing locus is $S_{K,\mu}$.
\end{corollary}

\begin{proof}
It suffices to check that $\lambda_\omega$ is $\Zcal$-ample. This is clear for Siegel-type Shimura varieties. The general Hodge-type case then follows from \Lem~\ref{amplepb}.
\end{proof}

\begin{rmk}
The section $h_K$ coincides with the one constructed by W. Goldring and the first author as a special case of \cite[\Cor~4.2.3]{Goldring-Koskivirta-Strata-Hasse} since the $k$-vector space $H^0(\GZip^\mu,\Vscr(\lambda))$ has at most dimension 1 for all $\lambda\in X^*(L)$ (Proposition \ref{GlobalSec}\eqref{item-Glob2}).
\end{rmk}

Let $\Scal_K^{\rm min}$ denote the minimal compactification of $\Scal_K$ constructed by Madapusi Pera and let $S_K^{\rm min}$ denote its special fiber. By \loccitn, the Hodge bundle $\omega$ extends naturally to an ample line bundle on $\Scal_K^{\rm min}$, which we continue to denote by $\omega$. We make the assumption that $\mathbf{G}^{\rm ad}$ has no factor isomorphic to $PGL_{2,\QQ}$. In this case, the boundary of $S_K^{\rm min}$ has codimension $>2$. By normality of $S_K^{\rm min}$, the section $h_K$ extends uniquely to a section of $\omega^{N_\mu}$ over $S^{\rm min}_K$. Define the $\mu$-ordinary locus $S^{\rm min}_{K,\mu}$ as the non-vanishing locus of this extension. Since $S_K^{\rm min}$ is projective, we deduce:

\begin{corollary}\label{muordaffine}
The $\mu$-ordinary locus $S^{\rm min}_{K,\mu}$ is affine.
\end{corollary}

\subsection{Calculation of $N_{\mu}$ for Shimura varieties of PEL type}\label{CalcNmu}

We now calculate the integer $N_{\mu}$ for pairs $(G,\mu)$ defined by a Shimura datum of PEL type with $G$ connected. Retain the notation of \S\ref{stabilizer}. Recall that $N_\mu$ is the exponent of $X^*(S)=\Hom(L_0(\FF_p),k^\times)$ (\Def~\ref{defNeta}). The classification of PEL data (see e.g., \cite[2.2 and 2.3]{Wedhorn-ordinariness-Shimura-varieties}) shows that there is an exact sequence
\[
1 \to \prod_{i=1}^m G_i \to G \xrightarrow{\eta}\GG_{m,\FF_p} \to 1.
\]
where each group $G_i$ is one of the following reductive groups.
\begin{enumerate}
\item[(AL)] $\Res_{\FF_{p^r}/\FF_p}GL_n$,
\item[(AU)] $\Res_{\FF_{p^r}/\FF_p}U(n)$,
\item[(C)] $\Res_{\FF_{p^r}/\FF_p}Sp_{2g}$.
\end{enumerate}
Concretely one often has $m = 1$, which we will assume for simplicity. Hence we have an exact sequence
\begin{equation}\label{EqMultiplierSeq}
1 \to G_1 \to G \xrightarrow{\eta}\GG_{m,\FF_p} \to 1,
\end{equation}
where $G_1=\Res_{\FF_{p^r}/\FF_p}G'$ for $G'\in\{GL_n,U(n),Sp_{2g}\}$. Any Levi $\FF_p$-subgroup of $G_1$ is of the form $\Res_{\FF_{p^r}/\FF_p}L'$ for some Levi $\FF_{p^r}$-subgroup $L'\subset G'$. In particular, there exists a Levi $\FF_{p^r}$-subgroup $L'_0$ of $G'$ such that $L_0 \cap G_1 = \Res_{\FF_{p^r}/\FF_p}L_0'$. As finite fields are of cohomological dimension $1$ we have an exact sequence
\begin{equation}\label{EqLeviSeq}
1 \to L'_0(\FF_{p^r}) \to L_0(\FF_p) \to \FF_p^{\times} \to 1.
\end{equation}

\textbf{The case $G' = GL_n$}. In this case the sequences \eqref{EqMultiplierSeq} and \eqref{EqLeviSeq} split and $L'_0$ is a product of general linear groups. Hence the abelianization $L'(\FF_{p^r})^{\rm ab}$ of $L'(\FF_{p^r})$ is a product of copies of $\FF^{\times}_{p^r}$ except if one of the general linear groups is $GL_2$, $p = 2$, and $r = 1$ ($GL_2(\FF_2)$ is isomorphic to the symmetric group $S_3$ and $GL_2(\FF_2)^{\rm ab}$ has $2$ elements). The exponent of $\Hom(L'_0(\FF_{p^r}),k^{\times})$ is the same as that of the prime-to-$p$-part of $L'(\FF_{p^r})^{\rm ab}$, which is $p^r-1$ in any case. As \eqref{EqLeviSeq} splits, $N_{\mu}=\lcm(p^r - 1,p-1)= p^r - 1$.

\textbf{The case $G' = U(n)$}. In this case $L_0$ is isomorphic to a product of groups of the form $H:=\Res_{\FF_{p^{2r}}/\FF_p}(GL_d)$ and at most one group $M$ whose $\FF_p$-valued points are given by
$$
M(\FF_p) = \{g \in GU(V,\phi), \eta(g) \in \FF^\times_p\},
$$
where $(V,\phi)$ is an hermitian space over $\FF_{p^r}$. By the first case, the exponent of $X^*(H(\FF_p))$ is $p^{2r}-1$ for each such group $H$. Over $k$, one has $M_k\simeq GL_m\times...\times GL_m \times \GG_m$. Hence $X^*(M)$ identifies naturally with $\ZZ^{r+1}$ such that $(a_1,...,a_n,b)\in \ZZ^{r+1}$ corresponds to the character $(A_1,...,A_r,\lambda)\mapsto \lambda^b\prod_i \det(A_i)^{a_i}$. The Galois action on $X^*(M)$ is given by $\sigma (a_1,...,a_r,b)=(-a_r,-a_1,...,-a_{r-1},b)$. It follows from \Cor~\ref{surject} that the exponent of $X^*(M(\FF_p))$ is $p^r-(-1)^r$.  Hence $N_\mu=\lcm(p^r-(-1)^r,p^{2r}-1)=p^{2r}-1$.

\textbf{The case $G' = Sp_{2g}$}. We first claim that again \eqref{EqLeviSeq} splits (although \eqref{EqMultiplierSeq} usually does not split in this case). Let $V$ be a symplectic space of dimension $2g$ over $\FF_{p^r}$. Then \eqref{EqMultiplierSeq} is on $\FF_p$-valued points the exact sequence
\[
1 \to Sp(V) \to \{g \in GSp(V), \eta(g) \in \FF^{\times}_p \} \to \FF_p^{\times} \to 1,
\]
where $\eta$ is the multiplier homomorphism.
There is only one proper parabolic subgroup given by a minuscule cocharacter for group of Dynkin type $C$, namely the Siegel parabolic. Hence $L_0$ is the subgroup of symplectic similitudes $V \to V$ that preserve a decomposition $V = U_1 \oplus U_2$, where $U_1$ and $U_2$ are totally isotropic subspaces of dimension $g$, hence
\[
L_0(\FF_p) = \{(g_1,g_2) \in GL(U_1)(\FF_{p^r}) \times GL(U_2)(\FF_{p^r}), g_1g_2^{-1*} \in \FF_p^{\times}\id_{U_1}\}
\]
where $(\ )^*$ denotes the duality of $U_1$ and $U_2$ induced by to the symplectic pairing. One has $GL(U_1)(\FF_{p^r}) \cong L'_0(\FF_{p})$ by $g_1 \mapsto (g_1,g_1^{-1*})$. Therefore a splitting of \eqref{EqLeviSeq} is given by $\FF_p^{\times} \to L_0(\FF_p)$, $\alpha \mapsto (\alpha\id_{U_1},\id_{U_2})$. 

As above, we deduce $N_{\mu} = p^r-1$.

\begin{rmk}
In the unitary case, the integer $N_\mu=p^{2r}-1$ agrees with the results of \cite{Goldring-Nicole-mu-Hasse}. One can show that the Hasse invariant of \Cor~\ref{mainShimura} coincides with those of \cite{Goldring-Nicole-mu-Hasse} and \cite{Boxer-thesis}.
\end{rmk}

\bibliographystyle{amsalpha}
\bibliography{bibliography_2016}

\end{document}